%=================================================
% Lucrare ArXiv [Zahir-Udriste-Balan] 2015
%=================================================
%\documentclass[10pt,twoside,a4paper,reqno]{amsart}
\documentclass[11pt,a4paper]{article}

\usepackage{amsfonts,amsthm,amsmath,graphicx,enumerate}
\usepackage{graphics,graphicx,amssymb,eucal,amsmath}
%=================================================

%

\setcounter{page}{1}

\theoremstyle{plain}
\newtheorem{thm}{Theorem}[section]

\newtheorem{pro}[thm]{Proposition}

\newtheorem{defn}[thm]{Definition}

\newtheorem{rem}[thm]{Remark}

\makeatletter \@addtoreset{equation}{section} \makeatother

\newcommand{\pam}{\par\medskip}
\newcommand{\pas}{\par\smallskip}

\def\mm{\medskip\\}

\def\beq#1{\begin{equation}\label{#1}}
\def\eeq{\end{equation}}

%=================================================
%
\title{Geometric properties of \\reliability polynomials}
\author{Z. A. H. Hassan, C. Udriste, V. Balan}

\begin{document}
\maketitle

\begin{abstract}
Geometric modeling of multivariate reliability polynomials is
based on algebraic hypersurfaces, constant level sets, rulings etc.
The solved basic problems are: (i) find the reliability polynomial using the Maple and Matlab software environment;
(ii) find restrictions of reliability polynomial via equi-reliable components;
(iii) how should the reliability components linearly depend on time, so that the reliability
of the system be linear in time?
The main goal of the paper is to find geometric
methods for analysing the reliability of electric systems used
inside aircrafts.
\end{abstract}

{\bf Keywords}: reliability polynomial; ruled hypersurfaces; electric systems inside aircrafts.

{\bf 2010 Mathematics Subject Clasification}: 60K10, 62N05, 90B25.

\section{Introduction}
During the decades following the war, many research laboratories
    and universities developed and initiated programs to study life
    testing and reliability problems \cite{Fw, AB}.
Numerous such research topics focus on the study of different
    types of reliability systems \cite{HZ, HZ1, HZ2}, like serial, parallel,
    serial-parallel, parallel-serial, and complex,
    which have considerable impact on different life fields \cite{Fw}-\cite{RS}.
Since there exist different important available systems, the
researchers attempted to find more than one method to solve these complex systems,
    and determine the optimal ones \cite{Fw, HZ1, IY}.\par
In the present work, we change the classical view, by trying to
get information from the differential geometry naturally related to the stochastic models.
Of particular interest is the study of reliability hypersurface
    and establishing the number of straight lines situated on this set. For further ideas, see \cite{CU}.
%============================================================================================
\section{Some definitions and basic terminology}
We shall present first the concepts in network topology and
    in graph theory which are needed
    to calculate the network reliability \cite{Fw}-\cite{SA}.\pas
\begin{defn}
    A graph $G = (V,E)$,
    where $V$ is the set of vertices (or nodes) and $E$ the set of
    edges (or arcs), is called a network.\pam
\end{defn}

\noindent
{\bf The network model}.
We describe our system as a directed network consisting of nodes
    and arcs, as illustrated in Fig. 1.
One node is considered as the \emph{source} (node $A$ in the
figure), and a second node is considered as a \emph{sink} (node $D$).
Each component of the network is identified as an arc passing from
    one node to another.
The arcs are numbered for identification. A {\em failure of a
component} is equivalent to an arc being removed or
    cut out from the network.
The system is {\em successful} if there exists a valid path from
the source to the sink. The system is said to be {\em failed} if no such path exists. The
{\em reliability of the system} is the probability that there
    exist one or more successful paths from the source to the sink \cite{Lj, ML}.

\medskip

\centerline{\includegraphics[scale=.34]{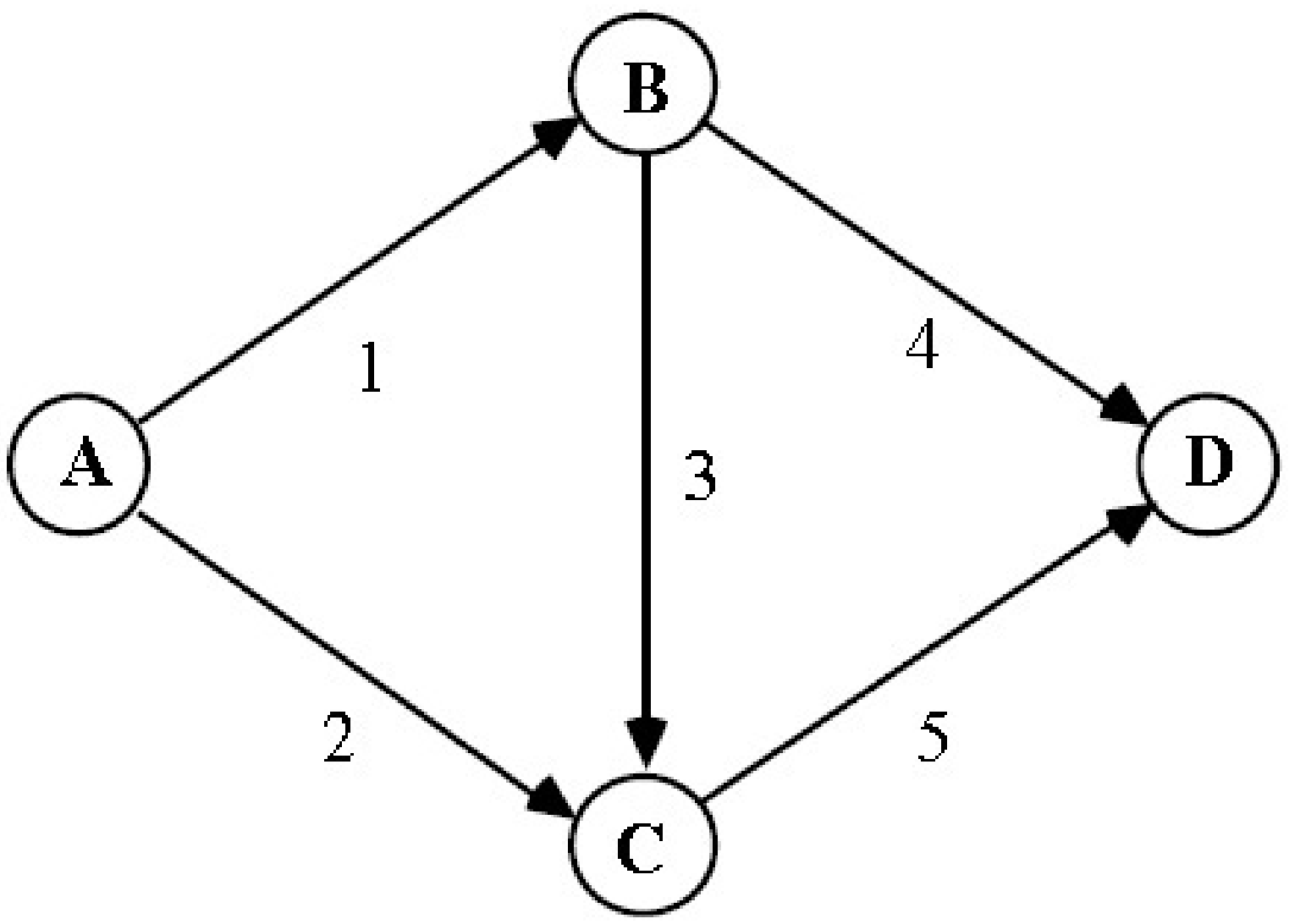}}

\centerline{{\sc Fig. 1}. A bridge network. }

\medskip

\noindent
\begin{defn}
 A set of components is called a {cut}
if, when all the components in this set fail, the
    system will fail, even if all other components are successful. \pas
\end{defn}

\begin{defn}
A cut, such that any removal of
one component from it causes the resulting
    set do not be a cut, is called a {minimal cut}.
 \end{defn}

    The set of all components is a cut.
    In the network a minimal cut breaks all
    simple paths from the source to the sink. In Fig. 1, we observe
    that the minimal cuts are: $\{1, 2\}$, $\{1, 5\}$, $\{2, 3, 4\}$,
    and $\{4, 5\}$.
%============================================================================================
\section{Complex reliability systems (network model)}

We introduce a graphical network model in which it is possible to
    determine whether a system is working correctly by determining
    whether a successful path exists in the system.
The system fails when no such path exists.\pas

The system in Fig. 2 cannot be split into a group of series
    and parallel systems.

    \medskip

\centerline{\includegraphics[scale=.44]{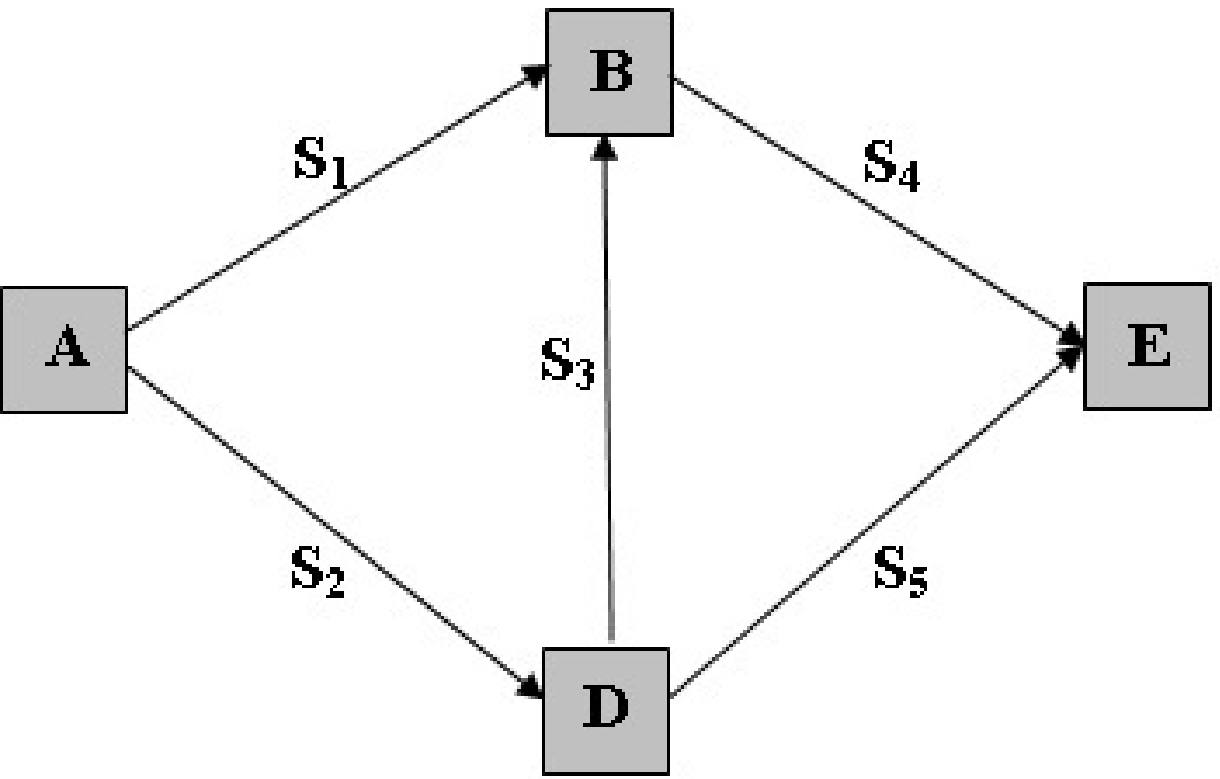}}

\centerline{{\sc Fig. 2}. A complex system (network model). }

\medskip

This is primarily due to the fact that the components $A$ and $D$
    each allow two paths emerging from them, whereas $B$ has only one;
    $S_1$, $S_2$, $S_3$, $S_4$ and $S_5$ are called subsystems
    or arcs.

\subsection{Minimal cut method}

There exist several methods for obtaining the reliability
of a complex system, as, for example, {\em minimal cut method}.
The minimal cut method is proper for systems which are connected in the form of a bridge.
When we apply this method to the system in Fig. 2, we should pursue the
following steps:

\begin{enumerate}[a)]

\item we enumerate all the minimal cut-sets in the system;

\item the failure of all components in a minimal
    cut-set causes system failure;

\item this implies \emph{parallel} connections among these
components;

\item each minimal cut set determines the system failure;

\item this implies \emph{series} connections among the minimal cut
sets;

\item we draw an equivalent system and use the
\emph{parallel/seies method}
    to compute the system reliability.
\end{enumerate}

\begin{thm}If $S_{1}, S_{2}, S_{3}, S_{4}, S_{5}$ are arcs (paths) in a bridge system (Fig. 2),
then the  reliability $R_{Mc}(t)$ of all system is
$$R_{Mc}(t)=R_1(t)R_4(t) + R_2(t)R_5(t) + R_2(t)R_3(t)R_4(t) \eqno(3.1)$$
$$
 - R_1(t)R_2(t)R_3(t)R_4(t) - R_1(t)R_2(t)R_4(t)R_5(t)
$$
$$
 - R_2(t)R_3(t)R_4(t)R_5(t) + R_1(t)R_2(t)R_3(t)R_4(t)R_5(t).
$$
\end{thm}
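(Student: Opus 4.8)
The plan is to treat the bridge of Fig.~2 through its minimal source-to-sink paths and to use the definition of reliability recalled above: $R_{Mc}(t)$ is the probability that at least one successful path from the source to the sink exists. First I would read off the minimal paths of the network, tracing the admissible routes through the arcs $S_1,\dots,S_5$: these are $P_1=\{S_1,S_4\}$, $P_2=\{S_2,S_5\}$ and $P_3=\{S_2,S_3,S_4\}$. A path is \emph{up} when all of its arcs work, so, writing $A_k$ for the event ``$P_k$ is up'', the system is successful exactly on $A_1\cup A_2\cup A_3$, whence $R_{Mc}(t)=P(A_1\cup A_2\cup A_3)$.

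The engine of the proof is inclusion-exclusion. Assuming the arcs fail independently with reliabilities $R_i(t)=P(S_i\text{ works})$, the event $A_k$ has probability $\prod_{i\in P_k}R_i(t)$, while an intersection $A_j\cap A_k$ forces every arc in the union $P_j\cup P_k$ to work, so $P(A_j\cap A_k)=\prod_{i\in P_j\cup P_k}R_i(t)$. Thus I would compute $P(A_1)=R_1R_4$, $P(A_2)=R_2R_5$, $P(A_3)=R_2R_3R_4$; then the pairwise terms from $P_1\cup P_2=\{1,2,4,5\}$, $P_1\cup P_3=\{1,2,3,4\}$, $P_2\cup P_3=\{2,3,4,5\}$; and finally the triple term from $P_1\cup P_2\cup P_3=\{1,2,3,4,5\}$. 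Substituting these into $P(A_1)+P(A_2)+P(A_3)-P(A_1A_2)-P(A_1A_3)-P(A_2A_3)+P(A_1A_2A_3)$ collapses, term by term, to the right-hand side of~(3.1).

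This reasoning is dual to the minimal-cut construction of steps (a)--(f): the parallel banks inside each cut and their series composition encode the same coherent structure function as the union of path-up events. The step I expect to be the main obstacle is exactly the one that the naive ``parallel/series'' recipe glosses over: the minimal paths (and likewise the minimal cuts) share arcs --- here $S_2$ and $S_4$ each belong to two paths --- so the corresponding blocks are \emph{not} stochastically independent and their reliabilities cannot simply be multiplied. Handling this overlap correctly is what produces the alternating higher-order terms; the negative quartic terms and the positive quintic term in~(3.1) are precisely the inclusion-exclusion corrections for the doubly- and triply-counted joint events. Checking that the unions $P_j\cup P_k$ are identified correctly and that no term is double-counted is therefore the only delicate bookkeeping in the argument.
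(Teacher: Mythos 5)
Your proof is correct, but it takes a genuinely different route from the paper. You work with the minimal \emph{paths} $\{S_1,S_4\}$, $\{S_2,S_5\}$, $\{S_2,S_3,S_4\}$ and apply inclusion--exclusion to the union of the three ``path up'' events; since under independence $P(A_j\cap A_k)=\prod_{i\in P_j\cup P_k}R_i(t)$, the alternating sum reproduces (3.1) exactly, and your identification of the three pairwise unions and the triple union is right. The paper instead works with the minimal \emph{cuts} $\{S_1,S_2\}$, $\{S_4,S_5\}$, $\{S_2,S_4\}$, $\{S_1,S_3,S_5\}$, redraws the system as a parallel--series network (Fig.~3), forms for each cut the probability $M_{Cj}(t)$ that the cut does not fail entirely, and multiplies $M_{C1}M_{C2}M_{C3}M_{C4}$ \emph{formally}, invoking the Boolean Function technique with the idempotency rule $R_i^2(t)\mapsto R_i(t)$ to compensate for the fact that the cuts share components and are therefore not independent. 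Your approach buys transparency: inclusion--exclusion is an exact probabilistic identity needing only independence of the individual arcs, and the higher-order terms appear as honest overlap corrections, whereas the paper's multiplication step (justified by the phrase ``the cut sets are incompatible'' plus the formal reduction) is symbolic and would require the BF technique's validity as a separate lemma. What the paper's approach buys in exchange is the explicit parallel--series equivalent system, which is the structural picture the rest of the paper (the pullback to the reliability polynomial in Section~4) builds on. One caveat for your write-up: you should justify from the directed structure of Fig.~2 why $\{S_1,S_3,S_5\}$ is \emph{not} a path (as it would be in the undirected bridge); this is consistent with the paper's cut list, since $\{S_1,S_3,S_5\}$ appears there as a minimal cut, but it deserves an explicit sentence.
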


\begin{proof} By using minimal cut method, we have

$$
\hbox{Minimal cut-set} = \{ (S_{1}, S_{2}), (S_{4}, S_{5}),(S_{2}, S_{4}), (S_{1}, S_{3}, S_{5}) \},
$$
and then Fig. 2 can be replaced by Fig. 3, which will represent the reliability of a
parallel-series system ~\cite{HZ}, as follows:

\medskip

\centerline{\includegraphics[scale=.44]{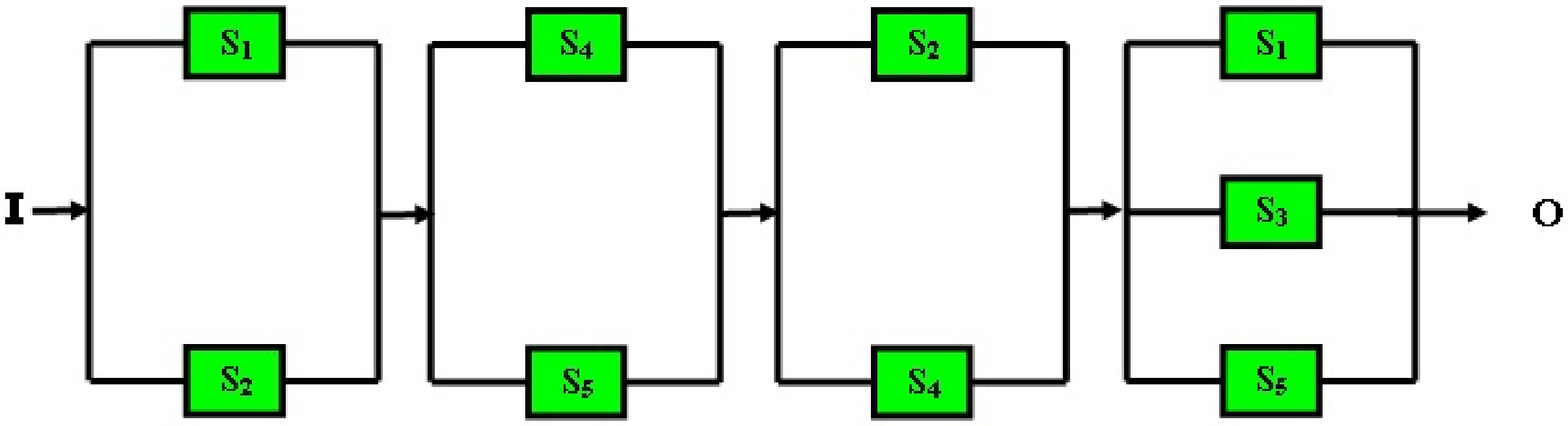}}

\centerline{{\sc Fig. 3}. A parallel-series system. }

\medskip
%\medskip

%\centerline{\includegraphics[width=10cm]{3.jpg}}

%\centerline{{\sc Figure 3}. A parallel-series system.}

%\medskip

%\begin{figure}
%\centering
%\includegraphics[width=0.7\textwidth]{3}
%\caption{A parallel-series system.}
%\end{figure}

We shall assume that $R_{i}(t)$ represents the reliability of the
$S_{i}$-th component (probability that the component $S_i$ to be functional on whole interval $[0,t]$)
in a cut set $M_{Cj}$, $j\in\{1, 2, 3, 4\}$. Therefore, there exist four possibilities
    of cut sets, and their representation shows as a \emph{parallel-series} system, as
    shown in Fig. 3, and any failure which occurs in a cut set that will cause the system failure.

    A symbolic expression for reliability of such a complex system is evaluated by applying Boolean Function (BF) Technique.
    The probability that each cut set $M_{Cj}$ fails is
$$
M_{C1}(t)=1- [(1-R_{1}(t))(1-R_{2}(t))]
$$
$$
M_{C2}(t) = 1- [(1-R_{4}(t))(1-R_{5}(t))]
$$
$$
M_{C3}(t) = 1- [(1-R_{2}(t))(1-R_{4}(t))]
$$
$$
M_{C4}(t)= 1- [(1-R_{1}(t))(1-R_{3}(t))(1-R_{5}(t))].
 $$

The cut sets are incompatible. That is why, the reliability of
the system is
$$
R_{Mc}(t)=M_{C1}(t)M_{C2}(t)M_{C3}(t)M_{C4}(t),
$$
where the computations have probabilistic-boolean sense, i.e.,
$R_i^2(t)$ is formally replaced by $R_i(t)$. We find the expression (3.1) which is the pullback
of the reliability polynomial (see also \cite{HZ,JC} and the Section 4).
\end{proof}

%============================================================================================

\section{Reliability-Geometry transfer}

The basic ingredient is a vector of probabilities $$(R_1(t),R_2(t),R_3(t),R_4(t),R_5(t))$$
and associated relation with $R_{Mc}(t)$. From the pullback we go to the polynomial equation and conversely.
Also, via a geometric interpretation, we find new properties of any reliability polynomial.

Here, geometric modeling means: (i) to change probability functions into variables
whose values are in the interval $[0,1]$, and then to variables in the interval $(-\infty,\infty)$, (ii) to analyse
and identify a body of techniques that can model certain classes
of piecewise parametric surfaces, subject to particular conditions of shape and smoothness, and (iii) to come back
into the context of probability variables, reinterpreting the geometric results.
These stochastic results are read within the (uni-, bi-,..., and six-dimensional) unit cube $[0,1], [0,1]^2$, ..., $[0,1]^6$.

\subsection{Multivariate reliability polynomial}

\begin{defn}
The multivariate polynomial
$$
R_{Mc}=R_1R_4 + R_2R_5 + R_2R_3R_4 - R_1R_2R_3R_4 \eqno(3.2)
$$
$$ - R_1R_2R_4R_5 - R_2R_3R_4R_5 + R_1R_2R_3R_4R_5
$$
is called {\it reliability polynomial}.
\end{defn}

The critical points of the polynomial ($3.2$) determine a {\it
variety} described by the system
$$\frac{\partial R}{\partial R_1}=0,\frac{\partial R}{\partial R_2}=0,
\frac{\partial R}{\partial R_3}=0,\frac{\partial R}{\partial
R_4}=0,\frac{\partial R}{\partial R_5}=0.$$
To solve this system, we can use Maple or Matlab procedures
    $$\begin{array}{lr}
        \hbox{solve}(\{
        &x_4 - x_2x_3x_4 - x_2x_4x_5 + x_2x_3x_4x_5 = 0,\mm
        &x_5 + x_3x_4 - x_1x_3x_4 - x_1x_4x_5 - x_3x_4x_5 + x_1x_3x_4x_5 = 0,\mm
        &x_2x_4 - x_1x_2x_4 - x_2x_4x_5 + x_1x_2x_4x_5 = 0,\mm
        &x_1 + x_2x_3 - x_1x_2x_3 - x_1x_2x_5 - x_2x_3x_5 + x_1x_2x_3x_5 = 0,\mm
        &x2 - x_1x_2x_4 - x_2x_3x_4 + x_1x_2x_3x_4=0\mm
        &\},\;\;[x_1, x_2, x_3, x_4, x_5])\end{array}$$
which lead to, e.g., the obvious solutions $\{(0,0,x_3,0,0)\;|\;x_3\in \mathbb{R}\}$,
which proves the nontrivial compatibility of the system.
\begin{thm} All critical points of reliability polynomial
are saddle points. \end{thm}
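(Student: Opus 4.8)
The plan is to show that at every critical point the Hessian of $R_{Mc}$ is neither positive nor negative semidefinite, so that each critical point is a saddle. The key structural observation is that the reliability polynomial (3.2) is \emph{multilinear}: it is a polynomial of degree one in each variable $R_i$ separately. Consequently every second partial derivative with respect to a single variable vanishes identically,
$$
\frac{\partial^2 R_{Mc}}{\partial R_i^2}=0,\qquad i=1,\dots,5,
$$
so the Hessian matrix $H=\bigl(\partial^2 R_{Mc}/\partial R_i\partial R_j\bigr)$ has a zero diagonal at \emph{every} point, in particular at every critical point.

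The central step is then to invoke the fact that a real symmetric matrix with all diagonal entries equal to zero cannot be definite or semidefinite of a single sign unless it is the zero matrix: if $H=(h_{ij})$ has $h_{ii}=0$ for all $i$, then for the standard basis vector $e_i$ we have $e_i^{\top}He_i=0$, and for any off-diagonal entry $h_{ij}\neq 0$ the vector $v=e_i+\lambda e_j$ yields $v^{\top}Hv=2\lambda h_{ij}$, which changes sign as $\lambda$ passes through $0$. Hence, as soon as one proves that at each critical point at least one mixed second derivative $\partial^2 R_{Mc}/\partial R_i\partial R_j$ is nonzero, the quadratic form takes both positive and negative values and the point is a saddle. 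First I would therefore compute the mixed partials from (3.2) and verify that they cannot all simultaneously vanish at a solution of the critical-point system; for instance on the explicit solution family $\{(0,0,R_3,0,0)\}$ found above, direct substitution should exhibit a surviving nonzero mixed derivative, and a general argument that the whole Hessian never collapses to zero on the critical variety completes the claim.

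The main obstacle I anticipate is the degenerate case: since the Hessian already has zero diagonal, a critical point could a priori be genuinely degenerate (Hessian identically zero), in which case the second-order test is inconclusive and ``saddle'' would require a separate higher-order or direct-comparison argument. I would handle this by checking, on each component of the critical variety produced by the Maple/Matlab computation, that the off-diagonal Hessian block does not vanish entirely; because the monomials in (3.2) of degree two (namely $R_1R_4$, $R_2R_5$) and the higher products contribute mixed derivatives that depend on the remaining free coordinates, I expect the Hessian to be nonzero except possibly on a lower-dimensional locus, which can be treated by an explicit neighborhood argument exhibiting directions of increase and decrease. Thus the essential content is the multilinearity of $R_{Mc}$ forcing a traceless Hessian, and the routine but necessary verification that this Hessian is not identically zero along the critical set.
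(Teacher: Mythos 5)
Your approach is in essence the paper's own: the paper's entire argument is the one-line assertion that ``the second order differential turns out to be non-definite,'' and your multilinearity observation (every $\partial^2 R_{Mc}/\partial R_i^2\equiv 0$, so the Hessian has zero diagonal and is indefinite whenever it is nonzero) is precisely the mechanism behind that assertion, worked out properly. The one place your proposal is incomplete is the degenerate case you yourself flag: you verify nonvanishing of the Hessian only on the family $(0,0,R_3,0,0)$ and defer the general verification to a case-by-case inspection of the critical variety. That concern is legitimate in principle, since a multilinear polynomial can have a critical point with identically zero Hessian --- e.g.\ $R_1R_2R_3$ at the origin --- where the second-order test proves nothing and one must pass to higher-order terms to see the saddle behavior.

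For this particular polynomial, however, the gap closes with a two-line computation, and no enumeration of the components of the critical variety is needed, because the Hessian of $R_{Mc}$ is nowhere the zero matrix. Indeed,
\begin{align*}
h_{14} &= \frac{\partial^2 R_{Mc}}{\partial R_1\partial R_4} = 1 - R_2\,(R_3+R_5-R_3R_5),\\
h_{25} &= \frac{\partial^2 R_{Mc}}{\partial R_2\partial R_5} = 1 - R_4\,(R_1+R_3-R_1R_3),\\
h_{12} &= \frac{\partial^2 R_{Mc}}{\partial R_1\partial R_2} = -R_4\,(R_3+R_5-R_3R_5).
\end{align*}
If $h_{14}=0$, then $R_2\neq 0$ and $R_3+R_5-R_3R_5=1/R_2\neq 0$; if in addition $h_{25}=0$, then $R_4\neq 0$; but then $h_{12}=-R_4/R_2\neq 0$. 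So these three entries never vanish simultaneously at any point of $\mathbb{R}^5$, hence at every critical point the Hessian is a nonzero symmetric matrix with zero diagonal, which is indefinite by your $e_i+\lambda e_j$ argument, and every critical point is a saddle. With this substituted for your proposed component-by-component check, your proof is complete, and it is in fact more careful than the paper's, which asserts non-definiteness without ever addressing the possibility of a totally degenerate critical point.
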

\begin{proof} We compute the second order differential, which turns
out to be non-definite.

It follows that the extrema points of
interest are only on the boundary of a compact set, as for example
$[0,1]^6$.
Consequently the significant optimization problems involving the
previous polynomial are of the type $\min\max$, $\max\min$ or
optimizations with constraints. In these cases we can find
solutions in the $6$-dimensional interval $[0,1]^6$.
\end{proof}
\noindent
An example for locating such solutions, using Maple, is described below:
$$\begin{array}{l}>\hbox{with(Optimization)};\mm
    >\hbox{Minimize}(x1*x4+x2*x5+x2*x3*x4-x1*x2*x3*x4\mm
    \qquad\qquad\qquad-x1*x2*x4*x5-x2*x3*x4*x5+x1*x2*x3*x4*x5,\mm
    \qquad\qquad\qquad {0. <= x1-2*x2},\hbox{assume = nonnegative)};
\end{array}$$
$$\begin{array}{l}
    \begin{array}{l}[0., [x1 = 0.684438040345821397e-1, x2 = 0., x3 = 1., x4 = 0.,\\
        \qquad\qquad x5 = .913400576368876061]];\end{array}\mm
    \begin{array}{l}[ 0.0,[x1= 0.0684438040345821397, x2= 0.0, x3= 1.0, x4= 0.0,\\
        \qquad\qquad x5= 0.913400576368876061]\end{array}\end{array}$$
$$\begin{array}{l}> \hbox{Maximize}(x1*x4+x2*x5+x2*x3*x4-x1*x2*x3*x4\\
    \qquad\qquad-x1*x2*x4*x5 -x2*x3*x4*x5+x1*x2*x3*x4*x5,\\
    \qquad\;\;\;\qquad x1 = 0 .. 1, x2 = 0 .. 1, x3 = 0 .. 1, x4 = 0 .. 1, x5 = 0 .. 1,\hbox{location})\end{array}$$
\subsection{Restrictions of reliability polynomial via \\equi-reliable components}
\begin{thm} There are $1, 15, 50, 60, 120$ diagonal
polynomials induced by 3.2, corresponding to $1, ... , 5$ variables.
\end{thm}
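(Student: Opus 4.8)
The plan is to read each diagonal polynomial as the outcome of imposing equi-reliability among the components and then to count the resulting expressions combinatorially. Declaring two components equi-reliable, $R_i=R_j$, is an equivalence relation on the index set $\{1,2,3,4,5\}$, so fixing exactly $k$ distinct reliability values corresponds to a partition of $\{1,2,3,4,5\}$ into $k$ nonempty classes; attaching a fresh variable $y_1,\dots,y_k$ to the classes and substituting into $(3.2)$ produces a polynomial in $k$ variables. A first, deliberately over-counting tally of such labelled substitutions is the number of surjections $\{1,\dots,5\}\to\{1,\dots,k\}$, namely $k!\,S(5,k)$ with $S(5,k)$ the Stirling number of the second kind. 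The two extreme columns are the easy endpoints: $k=1$ is the main diagonal $R_1=\dots=R_5$, and $k=5$ is the original polynomial with its classes relabelled.

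This first tally over-counts because distinct substitutions can yield the identical polynomial, and the key reduction is that two substitutions agree \emph{precisely} when they differ by a symmetry of $(3.2)$. I would therefore determine the symmetry group $G$ of the reliability polynomial by matching monomials. Counting how many monomials contain each variable gives the frequencies $4,6,4,6,4$ for $R_1,\dots,R_5$, which forces any symmetry to fix $R_3$ and to permute $\{R_1,R_5\}$ and $\{R_2,R_4\}$ separately; the unique degree-three term $R_2R_3R_4$ together with the degree-two pair $\{R_1R_4,\,R_2R_5\}$ then rules out every candidate except the identity and the involution $(1\,5)(2\,4)$, so $G=\mathbb{Z}_2=\langle(1\,5)(2\,4)\rangle$.

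With $G$ fixed, the number of genuinely distinct diagonal polynomials in $k$ variables is the number of $G$-orbits of the relevant substitutions, evaluated column by column through Burnside's lemma. The substitutions fixed by the involution are exactly those constant on the orbits $\{1,5\}$, $\{2,4\}$, $\{3\}$, and are hence governed by the smaller Stirling numbers $S(3,k)$; combining $S(5,k)$ with this fixed-point contribution, and respecting the labelling convention at the endpoints, is the mechanism designed to deliver the five values $1,15,50,60,120$.

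The hard part is not the Stirling bookkeeping but two delicate matching issues. First, one must check that $G$-inequivalent substitutions really do give distinct polynomials, i.e. that $(3.2)$ has no accidental coincidences beyond those forced by $G$; any extra collapse would lower a column's total. Second, and more serious, one must verify that each polynomial attributed to the column ``$k$ variables'' truly depends on all $k$ of its variables: if an identification makes the restricted polynomial independent of some $y_j$, that expression actually belongs to a lower column and must be re-assigned, which is exactly the degeneracy that shifts weight between the five numbers. Reconciling these with the endpoint $k=5$, where the value $120=5!$ signals that the fully distinct case is counted with all relabellings (no quotient), is the crux of pinning down the correct convention; I would settle both points by a direct Maple enumeration of the substituted polynomials, using the group-theoretic orbit count as the structural explanation and the symbolic enumeration as the verification.
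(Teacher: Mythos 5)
Your proposal does not prove the stated count, and the obstruction is concrete: the numbers $1,15,50,60,120$ are not orbit counts under the symmetry group of $(3.2)$, so the Burnside machinery you set up cannot deliver them. The paper's own proof is a bare per-shape enumeration of substitution schemes, with no quotient and no distinctness check: for $k$ variables it lists the partitions of $5$ into $k$ parts and counts multinomial-style distributions of the five components, namely $5+10=15$ for shapes $(1,4),(2,3)$; $5\cdot 4+5\binom{4}{2}=20+30=50$ for $(1,1,3),(1,2,2)$; $5\cdot 4\cdot 3=60$ for $(1,1,1,2)$; and $5!=120$ for $(1,1,1,1,1)$. Your candidate conventions, by contrast, give: surjections $k!\,S(5,k)=1,30,150,240,120$; surjections modulo $G=\langle(1\,5)(2\,4)\rangle$ via Burnside, $\tfrac12\bigl(k!\,S(5,k)+k!\,S(3,k)\bigr)=1,18,78,120,60$; unordered partitions $S(5,k)=1,15,25,10,1$; partitions modulo $G$: $1,9,15,6,1$. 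None of these sequences equals $1,15,50,60,120$ (the agreement at $k=2$ with $S(5,2)=15$ is a coincidence caused by the parts $1\neq4$, $2\neq3$ being distinct), so no ``labelling convention'' inside your framework reconciles the columns; the theorem's numbers follow only from the paper's multinomial convention with no identification at all.

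Moreover, your distinctness worry is well-founded and in fact works against the statement rather than for it: your determination of $G$ is correct, and since $(1\,5)(2\,4)$ is a symmetry of $(3.2)$, the substitutions $R_1=x$, $R_2=R_3=R_4=R_5=y$ and $R_5=x$, $R_1=R_2=R_3=R_4=y$ produce the identical polynomial $xy+y^2+y^3-2xy^3-y^4+xy^4$; hence the ``five polynomials'' of the $(1,4)$ sub-case are only three distinct ones, and the honest count of distinct $2$-variable diagonal polynomials is $9$, not $15$. So the quantity your program computes (genuinely distinct polynomials up to the symmetry of $(3.2)$) is a different quantity from the one the theorem tallies (substitution patterns), and deferring the reconciliation to ``a direct Maple enumeration'' is not a proof --- had you run it, the enumeration would refute, not verify, the claimed values. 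To prove the theorem as the paper intends it, you must drop the group-theoretic quotient entirely and simply count, for each shape, the ways of distributing the five components among the variables.
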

\begin{proof} These restrictions are counted as:

$1$ variable: $1$ polynomial.

$2$ variables: if one variable is $x$, and the other four are $y$,
we have five polynomials; if two variables are $x$ and the other
three are $y$, we have $C_5^2=10$ polynomials.

$3$ variables: if one variable is $x$, another is $y$ and the
other three are $z$, the we have $20$ polynomials; if one variable
is $x$, other two are $y$, and other two $z$, then we have $30$
polynomials.

$4$ variables: if one variable is $x$, another is $y$, another is
$z$ and the other two are $w$, then we have $60$ polynomials.

$5$ variables: the number of polynomials is permutations of $5$,
i.e. $120$.

For example, if we take $R_{1}=R_{2}= ...= R_{5}= x$, with
independent identical units, we get one diagonal (univariate) polynomial
\begin{equation}
y=2x^2+ x^3- 3x^4+x^5.
\end{equation}

Let $a$ be a parameter. For $a= \min y$, we have a double
positive root; for $a \in (\min y, 0)$, we have two positive
roots; for  $a=0$, we have three roots, zero and two positive
roots; for $a \in (0,  \max y)$, we have three positive roots; for
$a=\max y$, we have three roots, $y_{\max}$ as double and one
positive root; for $a$ greater than $\max y$, only one positive
root. \end{proof}

\begin{pro} The graph of the restriction of the polynomial
    $$y=2x^2+ x^3- 3x^4+x^5$$
to $[0,1]$ looks like "standard logistic sigmoid function graph" and particularly
like "{stress-strain curve for low-carbon steel" }.
\end{pro}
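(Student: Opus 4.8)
The plan is to translate the informal phrase ``looks like a sigmoid'' into a precise list of qualitative invariants of the graph and then verify each one by elementary calculus. A logistic-type S-curve on $[0,1]$ is characterized by four features: (i) it joins opposite corners, running from $(0,0)$ to $(1,1)$; (ii) it is monotonically increasing; (iii) it has horizontal tangents at both endpoints (the flat tails); and (iv) it possesses exactly one interior inflection point, being convex on the left and concave on the right. I would show that the restriction of $y=2x^2+x^3-3x^4+x^5$ to $[0,1]$ enjoys all four properties, and argue that together they force the characteristic S-shape.

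First I would record the boundary data. Direct substitution gives $y(0)=0$ and $y(1)=2+1-3+1=1$, establishing (i). Differentiating, $y'=4x+3x^2-12x^3+5x^4=x\,(5x^3-12x^2+3x+4)$, so $y'(0)=0$ and $y'(1)=4+3-12+5=0$, which is (iii). For monotonicity (ii) it suffices to show that the cubic factor $g(x)=5x^3-12x^2+3x+4$ stays nonnegative on $[0,1]$. I would analyse $g$ through $g'(x)=3(5x^2-8x+1)$, whose only root in $[0,1]$ is $x_{c}=(4-\sqrt{11})/5\approx 0.137$; since $g$ increases up to $x_{c}$ and then decreases to $g(1)=0$, while $g(0)=4>0$, one concludes $g\ge 0$ on $[0,1]$, hence $y'\ge 0$ there.

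For the S-shape (iv) I would compute $y''=20x^3-36x^2+6x+4$, observe $y''(0)=4>0$ and $y''(1)=4+6-36+20=-6<0$, and then show the sign change is unique. The uniqueness follows by studying $y'''=6(10x^2-12x+1)$, whose single root in $[0,1]$ lies near $x\approx 0.09$; thus $y''$ rises briefly and thereafter decreases strictly to $-6$, so it vanishes exactly once, at an interior inflection point $x_{\mathrm{infl}}\approx 0.55$. Consequently the curve is convex on the initial segment and concave afterwards, the defining morphology of the sigmoid.

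The main obstacle is conceptual rather than computational: because ``looks like'' is not a formal predicate, the genuine work lies in selecting the invariants (i)--(iv) so that they truly pin down the visual shape and simultaneously match the mechanical curve. I would close by mapping the same features onto the stress-strain diagram of low-carbon steel: the convex initial rise plays the role of the combined elastic and yield region, the inflection marks the passage into strain hardening, and the concave approach to the horizontal tangent at $x=1$ mirrors the plateau preceding the ultimate stress. Once the invariant list is fixed, the polynomial derivatives and the two quadratic discriminants are routine, and no delicate estimate is required.
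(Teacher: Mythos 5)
Your calculus is correct throughout: $y(0)=0$, $y(1)=1$, $y'=x(5x^3-12x^2+3x+4)$ vanishing at both endpoints, nonnegativity of the cubic factor on $[0,1]$, and the unique sign change of $y''=20x^3-36x^2+6x+4$ (from $+4$ to $-6$, with uniqueness via $y'''=6(10x^2-12x+1)$) all check out, so the graph is indeed an increasing convex-then-concave S-curve with flat tangents at $(0,0)$ and $(1,1)$. However, you should know that the paper offers no proof of this Proposition at all: it is stated as a bare visual observation, and the closest supporting material is the root-counting discussion in the proof of the preceding theorem (how many solutions $y=a$ has for various levels $a$), which concerns a different question. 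So your route is genuinely different in kind, not just in detail: you convert the informal predicate ``looks like a sigmoid'' into four checkable invariants (endpoint interpolation, monotonicity, horizontal tangents at both ends, a single interior inflection) and verify each, which supplies the rigor the paper omits and makes the claim falsifiable; the paper's implicit approach (inspection of a plot) buys nothing beyond plausibility. One simplification you may want: since $y'(1)=0$, the cubic factors as $5x^3-12x^2+3x+4=(x-1)(5x^2-7x-4)$, and the quadratic's roots $(7\pm\sqrt{129})/10\approx -0.44,\ 1.84$ lie outside $[0,1]$, so on $[0,1]$ both factors $x-1$ and $5x^2-7x-4$ are nonpositive and $y'=x(x-1)(5x^2-7x-4)\ge 0$ follows at once, with no need to track the critical point of $g$. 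The only soft spot is the final paragraph matching features to the metallurgical curve: an actual low-carbon-steel stress-strain diagram has a linear elastic segment and a yield drop, which your invariants do not (and cannot) capture, so that part should be flagged as analogy rather than proof --- exactly the same informality the paper itself indulges in.
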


In the case of two variables, the following particular cases appear: \par\smallskip\noindent
(i) the substitutions $R_1=x, R_2=R_3=R_4=R_5=y$ produce the diagonal polynomial (there are $5$ such polynomials):
    $$P(x,y)=xy - 2xy^3 + xy^4 + y^2 + y^3 - y^4;$$
(ii) the substitutions $R_1=R_2=x, R_3=R_4=R_5=y$ produce the diagonal polynomial (there are $C_5^2=10$
    such polynomials)
$$Q(x,y)= x^2y^3 - 2x^2y^2 - xy^3 + xy^2 + 2xy.$$
\subsection{Straight lines contained in the reliability hypersurface}

The graph of the reliability polynomial is a hypersurface in $R^6$
of Cartesian explicit equation 3.2, called {\it reliability
hypersurface}. Our aim is to solve the following problem:

{\bf Problem.} How should the components $R_i$ linearly depend on time,
so that the reliability of the system be linear in time?
Geometrically, this means to find all the straight lines
which are contained in the reliability hypersurface.
\begin{thm}
The family of straight lines in the reliability
hypersurface depends on at least five and at most six parameters.
\end{thm}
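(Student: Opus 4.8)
The plan is to turn the phrase ``the components depend linearly on time and the reliability is again linear in time'' into an algebraic incidence condition, and then count parameters. I would write a candidate line in the base $\R^5$ as $R_i(t)=a_i+b_it$, $i=1,\dots,5$, so that the reliability becomes $R_{Mc}(t)=P\bigl(R_1(t),\dots,R_5(t)\bigr)$, where $P$ is the polynomial (3.2). Since the hypersurface is the graph of $P$, the intercept $a_6$ and the slope $b_6$ of the lifted sixth coordinate are simply read off from the constant and first-order terms and add no freedom; the line lies in the reliability hypersurface precisely when the higher coefficients vanish, i.e.
$$c_2(a,b)=c_3(a,b)=c_4(a,b)=c_5(a,b)=0,$$
where $P(a+bt)=\sum_{k=0}^{5}c_k(a,b)\,t^k$. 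Everything thus reduces to four polynomial equations in the ten parameters $(a_1,\dots,a_5,b_1,\dots,b_5)$.

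The first simplification I would exploit is that $P$ is affine in $R_3$: one checks $P=A+R_3B$ with $A=R_1R_4+R_2R_5-R_1R_2R_4R_5$ and $B=R_2R_4(1-R_1)(1-R_5)$. This makes the top equation transparent, $c_5=b_1b_2b_3b_4b_5=0$, so every admissible direction has at least one vanishing rate; it also shows at once that \emph{every} line parallel to the $R_3$-axis (all rates but $b_3$ zero) lies in the hypersurface, since then $c_2=c_3=c_4=c_5\equiv0$ identically. That family already carries the six free parameters $a_1,\dots,a_5,b_3$, so the value six is attained; the family $R_1=R_4=1$, on which $P\equiv1$, with $a_2,a_3,a_5,b_2,b_3,b_5$ free, is a second such six-parameter family. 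I would organise the remaining analysis by the stratification ``one $b_i=0$'', using the symmetry of $P$ under $R_1\leftrightarrow R_5$, $R_2\leftrightarrow R_4$ (with $R_3$ fixed) to collapse the casework to a couple of representative strata, on each of which the factorisation of $B$ renders $c_2,c_3,c_4$ explicit and amenable to an elimination / Gr\"obner computation of the kind already used in the paper.

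For the upper bound I would argue that the four equations are generically independent. In the ambient $\R^{10}$ the four conditions cut out a variety every component of which has dimension at least $10-4=6$ by Krull's principal ideal theorem, and I would verify (at a generic point of each top stratum, with Maple) that the Jacobian of $(c_2,c_3,c_4,c_5)$ has full rank $4$, so that no component exceeds dimension six. Counting intercepts and rates together, the family of parametrised admissible lines then has dimension exactly six, which is the bound ``at most six''. The bound ``at least five'' comes from the single genuine redundancy in this count: rescaling the whole rate vector $b\mapsto\lambda b$ (equivalently, changing the time unit $t\mapsto t/\lambda$) reparametrises the \emph{same} line while fixing the origin, so one parameter is inessential. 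By contrast a shift $t\mapsto t+s$ is \emph{not} available here, because in the reliability setting $t=0$ is a fixed physical origin and the intercepts $a_i=R_i(0)$ are the genuine initial reliabilities. Discarding only the scale leaves five essential parameters, and the two explicit six-parameter families above realise both ends of the range.

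The step I expect to be the real obstacle is the uniform control of the rank of $\{c_2,c_3,c_4,c_5\}$ \emph{simultaneously over all strata} produced by $c_5=0$ and their overlaps, where several rates vanish at once, rather than only at a single generic point: it is precisely there that a component could a priori jump above dimension six through a loss of independence of the four conditions. The factorisation $B=R_2R_4(1-R_1)(1-R_5)$ together with the $R_1\leftrightarrow R_5,\ R_2\leftrightarrow R_4$ symmetry reduces this to finitely many representative strata, each of which can be resolved by an explicit elimination computation; assembling these stratum dimensions, and discounting the one redundant time-scale, is what pins the family between five and six parameters.
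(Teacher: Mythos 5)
Your setup is the same as the paper's (with the roles of $a_i,b_i$ swapped): substitute affine functions of $t$, identify coefficients of $t^0,\dots,t^5$, note that the top coefficient forces one rate to vanish, then stratify and count parameters; your factorization $P=A+R_3B$ with $B=R_2R_4(1-R_1)(1-R_5)$, the two explicit ruled families, and the appeal to Krull's theorem are genuine improvements in clarity over the paper's brute-force case list. The gap is in your upper bound, and it is not a computation you can defer to Maple: the verification you propose cannot succeed, because the claim it is meant to certify is false for parametrized lines. Take $R_2\equiv 0$ (i.e.\ $a_2=b_2=0$) and $R_1$ constant (i.e.\ $b_1=0$), with $R_3,R_4,R_5$ arbitrary affine functions of $t$. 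Every monomial of $P$ except $R_1R_4$ contains $R_2$, so along such a line $P(R(t))=a_1(a_4+b_4t)$ is affine in $t$; hence $c_2=c_3=c_4=c_5$ vanish \emph{identically} on the $7$-dimensional linear subspace $\{a_2=b_2=b_1=0\}\subset\mathbb{R}^{10}$, whose free coordinates are $a_1,a_3,a_4,a_5,b_3,b_4,b_5$. The irreducible component of your incidence variety containing this subspace has dimension at least $7$, and consequently the Jacobian of $(c_2,c_3,c_4,c_5)$ has rank at most $3$ at every point of that subspace (rank $4$ at any such point would force the zero set to be a $6$-dimensional manifold there, contradicting that it contains a $7$-dimensional set). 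The symmetric family $R_4\equiv0$, $R_5$ constant behaves the same way. So ``no component exceeds dimension six'' is false, your Maple rank check would return rank $\le 3$ on exactly the strata you flagged as the obstacle, and the upper bound collapses. (This stratum also slips through the paper's own proof: in their notation it is the branch $b_2=0$ of their Case 2, which they leave to ``the rest of cases are similar''; so the difficulty you anticipated is real, but it refutes rather than confirms the count.)

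The failure propagates into your lower bound, since your ``at least five'' is obtained by taking the (unproved, and in this formalization false) dimension six and discarding a single rescaling redundancy. Moreover, the reparametrization accounting is not consistent: a straight line contained in a hypersurface is a point set, so both the rescaling $t\mapsto t/\lambda$ and the translation $t\mapsto t+s$ are inessential --- your argument that $t=0$ is a ``fixed physical origin'' concerns the reliability interpretation, not the geometric statement about lines in the graph. Under the set-theoretic convention a $6$-parameter family of parametrized lines is a $4$-parameter family of lines, and the $7$-parameter family above gives $5$; with parametrizations counted, nothing may be discarded and you get $6$ and $7$ respectively. Neither convention yields the interval $[5,6]$ from your argument. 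A repair would have to fix the meaning of ``essential parameters'' first (the paper never does), and then treat the degenerate strata on which $P$ collapses (such as $R_2\equiv0$ or $R_4\equiv0$) by explicit elimination or by excluding them on stated grounds, rather than by a generic-rank argument, which is precisely what they defeat.
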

\begin{proof} Let us find the number of essential parameters such
that the family of straight lines
$$R_1=a_1 t+b_1, R_2=a_2 t+b_2, R_3=a_3 t+b_3,$$
$$ R_4=a_4 t+b_4, R_5=a_5 t+b_5, R(S)= a_6t+b_6,$$
with $a_1^2+...+a_6^2>0$, are included in reliability hypersurface. Here,
$t$ is a parameter on the line. All reasonings remain similar if we
replace $a_it+b_i$ by $a_ie^{-b_i t},\, a_i, \,b_i >0$.

First, we compute the following products:
\begin{multline*}\begin{split}
    R_1R_4=&a_1a_4 t^2 + (a_1b_4+b_1a_4)t + b_1b_4,\\
    R_2R_5=&a_2a_5 t^2 + (a_2b_5+b_2a_5)t + b_2b_5,\\
    R_2R_3R_4=&(a_2a_3a_4)t^3 + (a_2a_3b_4 + a_2a_4b_3 + a_3a_4b_2)t^2\\&+ (a_2b_3b_4 + a_3b_2b_4
         + a_4b_2b_3)t + b_2b_3b_4,\\
    R_1R_2R_3R_4 =
        & (a_1a_2a_3a_4)t^4 + (a_1a_2a_3b_4 + a_1a_2a_4b_3 +a_1a_3a_4b_2\\& + a_2a_3a_4b_1)t^3
         + (a_1a_2b_3b_4 + a_1a_3b_2b_4+ a_1a_4b_2b_3\\
        &+ a_2a_3b_1b_4 + a_2a_4b_1b_3 + a_3a_4b_1b_2)t^2\\
        &+ (a_1b_2b_3b_4 + a_2b_1b_3b_4 + a_3b_1b_2b_4 + a_4b_1b_2b_3)t\\&+b_1b_2b_3b_4,
         \end{split}\end{multline*}
  \begin{multline*}\begin{split}
        R_1R_2R_4R_5=
        &(a_1a_2a_4a_5)t^4 + (a_1a_2a_4b_5 + a_1a_2a_5b_4 +a_1a_4a_5b_2\\& + a_2a_4a_5b_1)t^3
         + (a_1a_2b_4b_5 + a_1a_4b_2b_5+ a_1a_5b_2b_4\\&+ a_2a_4b_1b_5 + a_2a_5b_1b_4+ a_4a_5b_1b_2)t^2+ (a_1b_2b_4b_5 \\&
        + a_2b_1b_4b_5 + a_4b_1b_2b_5 + a_5b_1b_2b_4)t+ b_1b_2b_4b_5,
     \end{split}\end{multline*}

\begin{multline*}\begin{split}
    R_2R_3R_4R_5 =& (a_2a_3a_4a_5)t^4 + (a_2a_3a_4b_5 + a_2a_3a_5b_4 \\
    &+a_2a_4a_5b_3 + a_3a_4a_5b_2)t^3\\
        & + (a_2a_3b_4b_5 + a_2a_4b_3b_5+ a_2a_5b_3b_4 \\
        &+ a_3a_4b_2b_5 + a_3a_5b_2b_4+ a_4a_5b_2b_3)t^2\\
        & + (a_2b_3b_4b_5 + a_3b_2b_4b_5 + a_4b_2b_3b_5 \\&+ a_5b_2b_3b_4)t+ b_2b_3b_4b_5,\\
    R_1R_2R_3R_4R_5=&(a_1a_2a_3a_4a_5)t^5 + (a_1a_2a_3a_4b_5 +a_1a_2a_3a_5b_4 + a_1a_2a_4a_5b_3\\
        & + a_1a_3a_4a_5b_2 +a_2a_3a_4a_5b_1)t^4 + (a_1a_2a_3b_4b_5 + a_1a_2a_4b_3b_5\\
        & +a_1a_2a_5b_3b_4 + a_1a_3a_4b_2b_5 + a_1a_3a_5b_2b_4 +a_1a_4a_5b_2b_3\\
        & + a_2a_3a_4b_1b_5 + a_2a_3a_5b_1b_4 +a_2a_4a_5b_1b_3 + a_3a_4a_5b_1b_2)t^3\\
        & + (a_1a_2b_3b_4b_5 +a_1a_3b_2b_4b_5 + a_1a_4b_2b_3b_5 + a_1a_5b_2b_3b_4\\
        & +a_2a_3b_1b_4b_5 + a_2a_4b_1b_3b_5 + a_2a_5b_1b_3b_4 +a_3a_4b_1b_2b_5\\
        & + a_3a_5b_1b_2b_4 + a_4a_5b_1b_2b_3)t^2 +(a_1b_2b_3b_4b_5 + a_2b_1b_3b_4b_5\\
        & + a_3b_1b_2b_4b_5 +a_4b_1b_2b_3b_5 + a_5b_1b_2b_3b_4)t + b_1b_2b_3b_4b_5.
\end{split}\end{multline*}
%===
    By replacement, ordering by powers of $t$ and identifying,
    we obtain a system whose solutions describe the number of straight lines situated on the reliability hypersurface.
    We write the system ordering by the coefficients of the powers of degree from zero to five, relative to $t$:

\begin{multline*}\begin{split}
    b_6=&b_1b_4 + b_2b_5 + b_2b_3b_4 - b_1b_2b_3b_4 - b_1b_2b_4b_5 -b_2b_3b_4b_5 + b_1b_2b_3b_4b_5, \\
    a_6=&a_1b_4 + a_4b_1 + a_2b_5 +a_5b_2 + a_2b_3b_4 + a_3b_2b_4 + a_4b_2b_3 - a_1b_2b_3b_4\\
        & -a_2b_1b_3b_4 - a_3b_1b_2b_4 - a_4b_1b_2b_3 - a_1b_2b_4b_5 -a_2b_1b_4b_5 - a_4b_1b_2b_5\\
        & - a_5b_1b_2b_4 - a_2b_3b_4b_5 -a_3b_2b_4b_5 - a_4b_2b_3b_5 - a_5b_2b_3b_4 + a_1b_2b_3b_4b_5\\
        & +a_2b_1b_3b_4b_5 + a_3b_1b_2b_4b_5 + a_4b_1b_2b_3b_5 +a_5b_1b_2b_3b_4;\\
    0=&a_1a_4 + a_2a_5 + a_2a_3b_4 + a_2a_4b_3 + a_3a_4b_2 -a_1a_2b_3b_4 - a_1a_3b_2b_4\\
             &- a_1a_4b_2b_3 - a_2a_3b_1b_4 -a_2a_4b_1b_3 - a_3a_4b_1b_2 \\
             &- a_1a_2b_4b_5 - a_1a_4b_2b_5 -a_1a_5b_2b_4\\
        & - a_2a_4b_1b_5 - a_2a_5b_1b_4 - a_4a_5b_1b_2\\
        & -a_2a_3b_4b_5 - a_2a_4b_3b_5 - a_2a_5b_3b_4
\end{split}\end{multline*}
\begin{multline*}\begin{split}
        & - a_3a_4b_2b_5 -a_3a_5b_2b_4 - a_4a_5b_2b_3 \\
        &+ a_1a_2b_3b_4b_5 + a_1a_3b_2b_4b_5 +a_1a_4b_2b_3b_5\\
        & + a_1a_5b_2b_3b_4 + a_2a_3b_1b_4b_5 +a_2a_4b_1b_3b_5 \\
        &+ a_2a_5b_1b_3b_4 + a_3a_4b_1b_2b_5\\
        & +a_3a_5b_1b_2b_4 + a_4a_5b_1b_2b_3;\\
    0=&a_2a_3a_4 - a_1a_2a_3b_4 - a_1a_2a_4b_3 - a_1a_3a_4b_2 \\
    &-a_2a_3a_4b_1 - a_1a_2a_4b_5 - a_1a_2a_5b_4\\
        & - a_1a_4a_5b_2 -a_2a_4a_5b_1 - a_2a_3a_4b_5 - a_2a_3a_5b_4\\
        & - a_2a_4a_5b_3 -a_3a_4a_5b_2\\
        & + a_1a_2a_3b_4b_5 + a_1a_2a_4b_3b_5 +a_1a_2a_5b_3b_4 \\
        &+ a_1a_3a_4b_2b_5 + a_1a_3a_5b_2b_4\\
        & +a_1a_4a_5b_2b_3 + a_2a_3a_4b_1b_5 + a_2a_3a_5b_1b_4 \\
        &+a_2a_4a_5b_1b_3 + a_3a_4a_5b_1b_2;\\
    0=&a_1a_2a_3a_4b_5 - a_1a_2a_4a_5 - a_2a_3a_4a_5 - a_1a_2a_3a_4\\
    & + a_1a_2a_3a_5b_4 + a_1a_2a_4a_5b_3\\
        & + a_1a_3a_4a_5b_2 + a_2a_3a_4a_5b_1,\\
         0=&a_{1}a_{2}a_{3}a_{4}a_{5}.\end{split}\end{multline*}
Starting from the last equation, at least one of the numbers $a_i,
i=1,...,5$ must be zero (number of cases:
$C_5^1+C_5^2+C_5^3+C_5^4=30$). So the straight-lines are parallel
to some hyperplane of coordinates. The first equation shows that
at $t=0$, the point $(b_1,...,b_6)$ is on the reliability
hypersurface. This remark requires the following procedure: we
choose arbitrarily $b_1,...,b_5$, and compute $b_6$. We replace the
values $b_1,...,b_5$ in the remaining equations. If the new system,
in unknown $(a_1,...,a_6)$, has a solution with
at least non-zero component, then there exists one straight line
passing through the point $(b_1,...,b_6)$ and lying on the
reliability hypersurface.
Explicitly, after solving the algebraic system, we have the following cases:\par\smallskip\noindent
%===
{\small
\textbf{Case 1} ($a_1=0$):
\begin{multline*}\begin{split}
    b_6 =&b_1b_4 + b_2b_5 + b_2b_3b_4 - b_1b_2b_3b_4 - b_1b_2b_4b_5\\
    & -b_2b_3b_4b_5 + b_1b_2b_3b_4b_5,\\
    a_6=& a_4b_1 + a_2b_5 + a_5b_2 + a_2b_3b_4 + a_3b_2b_4 + a_4b_2b_3\\
            &- a_2b_1b_3b_4 - a_3b_1b_2b_4  - a_4b_1b_2b_3- a_2b_1b_4b_5 \\
            &-a_4b_1b_2b_5 - a_5b_1b_2b_4 - a_2b_3b_4b_5 - a_3b_2b_4b_5\\
        & -a_4b_2b_3b_5 - a_5b_2b_3b_4 + a_2b_1b_3b_4b_5\\
        & + a_3b_1b_2b_4b_5 +a_4b_1b_2b_3b_5+ a_5b_1b_2b_3b_4,
     \end{split}\end{multline*}
    \begin{multline*}\begin{split}
    0 =& a_2a_5 + a_2a_3b_4 + a_2a_4b_3 + a_3a_4b_2\\
    & - a_2a_3b_1b_4 -a_2a_4b_1b_3 - a_3a_4b_1b_2\\
        & - a_2a_4b_1b_5 - a_2a_5b_1b_4 -a_4a_5b_1b_2\\
        & - a_2a_3b_4b_5 - a_2a_4b_3b_5 - a_2a_5b_3b_4\\
        & -a_3a_4b_2b_5 - a_3a_5b_2b_4 - a_4a_5b_2b_3 \\
        &+ a_2a_3b_1b_4b_5 +a_2a_4b_1b_3b_5 + a_2a_5b_1b_3b_4\\
        & + a_3a_4b_1b_2b_5 +a_3a_5b_1b_2b_4 + a_4a_5b_1b_2b_3,\\
    0 =&-a_2a_3a_4 - a_2a_3a_4b_1 - a_2a_4a_5b_1 \\
    &- a_2a_3a_4b_5 -a_2a_3a_5b_4 - a_2a_4a_5b_3 - a_3a_4a_5b_2\\
        &  + a_2a_3a_4b_1b_5 +a_2a_3a_5b_1b_4 + a_2a_4a_5b_1b_3 + a_3a_4a_5b_1b_2,\\
            0=&a_2a_3a_4a_5(b_1- 1).
            \end{split}\end{multline*}
i) ($a_1=0$ and $b_1=1$):
\begin{multline*}\begin{split}
    b_6 =&b_4 + b_2b_5 - b_2b_4b_5,\\
    a_6=& a_4 + a_2b_5 + a_5b_2 - a_2b_4b_5 - a_4b_2b_5 - a_5b_2b_4,\\
    0 =& a_2a_5 - a_2a_4b_5 - a_2a_5b_4 - a_4a_5b_2,\\
    0 =&a_2a_4a_5.\end{split}\end{multline*}
    In this case, for an arbitrary point $(b_1=1,b_2,b_3,b_4,b_5,b_6)$,
    the solution $(a_1=0,a_2,a_3$, $a_4,a_5,a_6)$ depends on six parameters (a family of straight lines).
    All the foregoing straight lines are in the plane $R_1=1$.
    In this case the reliability hypersurface is a fiber bundle (ruled hypersurface).
ii) ($a_1=a_2=0$):
\begin{multline*}\begin{split}
    b_6 =&b_1b_4 + b_2b_5 + b_2b_3b_4 - b_1b_2b_3b_4 - b_1b_2b_4b_5\\
    & -b_2b_3b_4b_5 + b_1b_2b_3b_4b_5,\\
    a_6=& a_4b_1 + a_5b_2 + a_3b_2b_4 + a_4b_2b_3 - a_3b_1b_2b_4\\
    & -a_4b_1b_2b_3 - a_4b_1b_2b_5- a_5b_1b_2b_4 \\
        & - a_3b_2b_4b_5 -a_4b_2b_3b_5 - a_5b_2b_3b_4 \\
        &+ a_3b_1b_2b_4b_5 + a_4b_1b_2b_3b_5+ a_5b_1b_2b_3b_4,\\
    0 =& a_3a_4b_2 - a_3a_4b_1b_2 - a_4a_5b_1b_2 \\
    &- a_3a_4b_2b_5 -a_3a_5b_2b_4 - a_4a_5b_2b_3 \\
        &+ a_3a_4b_1b_2b_5 + a_3a_5b_1b_2b_4+ a_4a_5b_1b_2b_3,\\
        0 =&a_3a_4a_5b_2(b_1- 1).
        \end{split}\end{multline*}
iii) ($a_1=a_3=0$):
\begin{multline*}\begin{split}
    b_6 =&b_1b_4 + b_2b_5 + b_2b_3b_4 - b_1b_2b_3b_4 \\
    &- b_1b_2b_4b_5 -b_2b_3b_4b_5 + b_1b_2b_3b_4b_5,\\
    a_6=& a_4b_1 + a_2b_5 + a_5b_2 + a_2b_3b_4 + a_4b_2b_3\\
    & -a_2b_1b_3b_4 - a_4b_1b_2b_3\\
\end{split}\end{multline*}
\begin{multline*}\begin{split}
        & - a_2b_1b_4b_5 - a_4b_1b_2b_5 -a_5b_1b_2b_4 \\
        &- a_2b_3b_4b_5 - a_4b_2b_3b_5 - a_5b_2b_3b_4\\
        & +a_2b_1b_3b_4b_5 + a_4b_1b_2b_3b_5 + a_5b_1b_2b_3b_4,\\
    0 =& a_2a_5 + a_2a_4b_3 - a_2a_4b_1b_3 - a_2a_4b_1b_5\\
    & -a_2a_5b_1b_4 - a_4a_5b_1b_2 - a_2a_4b_3b_5\\
        & - a_2a_5b_3b_4 -a_4a_5b_2b_3 + a_2a_4b_1b_3b_5\\
        & + a_2a_5b_1b_3b_4 + a_4a_5b_1b_2b_3,\\
    0 =&a_2a_4a_5(b_1b_3 - b_3 - b_1).
    \end{split}\end{multline*}

iv) ($a_1a_4=0$):
\begin{multline*}\begin{split}
    b_6 =&b_1b_4 + b_2b_5 + b_2b_3b_4 - b_1b_2b_3b_4\\
    & - b_1b_2b_4b_5 -b_2b_3b_4b_5 + b_1b_2b_3b_4b_5,\\
    a_6=& a_2b_5 + a_5b_2 + a_2b_3b_4 + a_3b_2b_4 - a_2b_1b_3b_4\\
    & -a_3b_1b_2b_4 - a_2b_1b_4b_5- a_5b_1b_2b_4\\
        &  - a_2b_3b_4b_5 -a_3b_2b_4b_5 - a_5b_2b_3b_4 + a_2b_1b_3b_4b_5\\
        & + a_3b_1b_2b_4b_5+ a_5b_1b_2b_3b_4,\\
    0 =& a_2a_5 + a_2a_3b_4 - a_2a_3b_1b_4 - a_2a_5b_1b_4\\
    & -a_2a_3b_4b_5 - a_2a_5b_3b_4- a_3a_5b_2b_4\\
        & + a_2a_3b_1b_4b_5 +a_2a_5b_1b_3b_4 + a_3a_5b_1b_2b_4,\\
            0=&a_2a_3a_5b_4(b_1-1).
            \end{split}\end{multline*}
v) ($a_1a_5=0$):
\begin{multline*}\begin{split}
    b_6 =&b_1b_4 + b_2b_5 + b_2b_3b_4 - b_1b_2b_3b_4\\
    & - b_1b_2b_4b_5 -b_2b_3b_4b_5 + b_1b_2b_3b_4b_5,\\
    a_6=& a_4b_1 + a_2b_5 + a_2b_3b_4 + a_3b_2b_4 \\
    &+ a_4b_2b_3 -a_2b_1b_3b_4 - a_3b_1b_2b_4\\
        & - a_4b_1b_2b_3 - a_2b_1b_4b_5 -a_4b_1b_2b_5 \\
        &- a_2b_3b_4b_5 - a_3b_2b_4b_5 - a_4b_2b_3b_5\\
        & +a_2b_1b_3b_4b_5 + a_3b_1b_2b_4b_5 + a_4b_1b_2b_3b_5,\\
    0 =& a_2a_3b_4 + a_2a_4b_3 + a_3a_4b_2 - a_2a_3b_1b_4 \\
    &-a_2a_4b_1b_3 - a_3a_4b_1b_2 - a_2a_4b_1b_5\\
        & - a_2a_3b_4b_5 -a_2a_4b_3b_5 - a_3a_4b_2b_5 \\
        &+ a_2a_3b_1b_4b_5 + a_2a_4b_1b_3b_5 + a_3a_4b_1b_2b_5,\\
    0 =&a_2a_3a_4(1-b_1-b_5+b_1b_5).
    \end{split}\end{multline*}
    In case v), for an arbitrary point $(b_1,b_2,b_3,b_4,b_5,b_6)$,
    the solution $(a_1=0,a_2,a_3$, $a_4,a_5=0,a_6)$ depends on six parameters (a family of straight lines).
    In this case the reliability hypersurface is a fiber bundle (ruled hypersurface).
    The situations ii)-iv) are similar.
\textbf{Case 2.} ($a_1=0, a_2=0$):
\begin{multline*}\begin{split}
    b_6 =&b_1b_4 + b_2b_5 + b_2b_3b_4 - b_1b_2b_3b_4 \\
    &- b_1b_2b_4b_5 -b_2b_3b_4b_5 + b_1b_2b_3b_4b_5,\\
    a_6=& a_4b_1 + a_5b_2 + a_3b_2b_4 + a_4b_2b_3 \\
    &- a_3b_1b_2b_4 -a_4b_1b_2b_3 - a_4b_1b_2b_5- a_5b_1b_2b_4\\
        &  - a_3b_2b_4b_5 -a_4b_2b_3b_5 - a_5b_2b_3b_4 \\
        &+ a_3b_1b_2b_4b_5 + a_4b_1b_2b_3b_5+ a_5b_1b_2b_3b_4,\\
    0 =& a_3a_4b_2 - a_3a_4b_1b_2 - a_4a_5b_1b_2 \\
    &- a_3a_4b_2b_5 -a_3a_5b_2b_4 - a_4a_5b_2b_3\\
        & + a_3a_4b_1b_2b_5 + a_3a_5b_1b_2b_4+ a_4a_5b_1b_2b_3,\\
             0 =&a_3a_4a_5b_2(b_1- 1).
\end{split}\end{multline*}
i) ($a_1=0,a_2=0$ and $b_1=1$):
\begin{multline*}\begin{split}
    b_6 =&b_4 + b_2b_5 - b_2b_4b_5,\\
    a_6=& a_4 + a_5b_2 - a_4b_2b_5 - a_5b_2b_4,\\
    0 =& a_2a_5 - a_2a_4b_5 - a_2a_5b_4 - a_4a_5b_2,\\
    0 =&a_4a_5b_2 .
    \end{split}\end{multline*}
ii) ($a_1=0,a_2=0, a_3=0$):
\begin{multline*}\begin{split}
    b_6 =&b_1b_4 + b_2b_5 + b_2b_3b_4 - b_1b_2b_3b_4 \\
    &- b_1b_2b_4b_5 -b_2b_3b_4b_5 + b_1b_2b_3b_4b_5,\\
    a_6=& a_4 + a_5b_2 - a_4b_2b_5 - a_5b_2b_4,\\
    0 =& a_4b_1 + a_5b_2 + a_4b_2b_3 - a_4b_1b_2b_3 \\
    &- a_4b_1b_2b_5 -a_5b_1b_2b_4 - a_4b_2b_3b_5- a_5b_2b_3b_4\\
        &  + a_4b_1b_2b_3b_5 +a_5b_1b_2b_3b_4\\
            0 =&a_4a_5(b_1b_2b_3 - b_2b_3 - b_1b_2).
            \end{split}\end{multline*}
iii) ($a_1=0,a_2=0, a_4=0$):
\begin{multline*}\begin{split}
    b_6 =&b_1b_4 + b_2b_5 + b_2b_3b_4 - b_1b_2b_3b_4\\
    & - b_1b_2b_4b_5 -b_2b_3b_4b_5 + b_1b_2b_3b_4b_5,\\
    a_6=& a_5b_2 + a_3b_2b_4 - a_3b_1b_2b_4 \\
    &- a_5b_1b_2b_4 -a_3b_2b_4b_5 - a_5b_2b_3b_4\\
        & + a_3b_1b_2b_4b_5 + a_5b_1b_2b_3b_4,\\
           0 =&a_3a_5b_2b_4(b_1-1).
           \end{split}\end{multline*}
\textbf{Case 3.} ($a_1=0, a_2=0, a_3=0$):
\begin{multline*}\begin{split}
    b_6 =&b_1b_4 + b_2b_5 + b_2b_3b_4 \\
    &- b_1b_2b_3b_4 - b_1b_2b_4b_5 - b_2b_3b_4b_5 + b_1b_2b_3b_4b_5,\\
    a_6=& a_4b_1 + a_5b_2 + a_4b_2b_3 - a_4b_1b_2b_3 - a_4b_1b_2b_5 -
    a_5b_1b_2b_4 - a_4b_2b_3b_5\\& - a_5b_2b_3b_4 + a_4b_1b_2b_3b_5 +
    a_5b_1b_2b_3b_4,\\
    0 =&a_4a_5(b_1b_2b_3-b_2b_3-b_1b_2).
    \end{split}\end{multline*}
i) ($a_1=0,a_2=0,a_3=0, a_4=0$):
\begin{multline*}\begin{split}
    b_6 =&b_1b_4 + b_2b_5 + b_2b_3b_4 \\
    &- b_1b_2b_3b_4 - b_1b_2b_4b_5 - b_2b_3b_4b_5 + b_1b_2b_3b_4b_5,\\
    a_6=&a_5b_2(1-b_1b_4-b_3b_4+b_1b_3b_4).
    \end{split}\end{multline*}
ii) ($a_1=0,a_2=0,a_3=0, a_5=0$):
\begin{multline*}\begin{split}
    b_6 =&b_1b_4 + b_2b_5 + b_2b_3b_4 - b_1b_2b_3b_4 - b_1b_2b_4b_5 -
    b_2b_3b_4b_5 + b_1b_2b_3b_4b_5,\\
    a_6=& a_4b_1 + a_4b_2b_3 - a_4b_1b_2b_3 - a_4b_1b_2b_5 -
    a_4b_2b_3b_5 + a_4b_1b_2b_3b_5.
    \end{split}\end{multline*}
\textbf{Case 4.} ($a_1=0, a_2=0, a_3=0, a_4=0$):
\begin{multline*}\begin{split}
b_6 =&b_1b_4 + b_2b_5 + b_2b_3b_4 - b_1b_2b_3b_4 - b_1b_2b_4b_5 -
b_2b_3b_4b_5 + b_1b_2b_3b_4b_5,\\
a_6=& a_5b_2(1 - b_1b_4 - b_3b_4 + b_1b_3b_4).
\end{split}\end{multline*}
The rest of cases are similar.
}
%===
For each case, using the Jacobian matrix and its rank, we count the number of essential parameters.
\end{proof}
\subsection{Returning to the probability framework}
In order to return to the probability ansatz, we must assume that the
coefficients $a_i, b_i,i=1,...,6$ satisfy the conditions imposed by the
assumption that each function $a_i t + b_i$, $i=1,...,6,$ is a probability,
i.e., $0\leq a_i t + b_i \leq 1, i=1,...,6$. If $a_k=0$, then $0\leq b_k\leq 1$.
We further assume that all $a_i$ are different from zero.
(i) If $a_i>0$, then we find the intervals\ $I_i:\,\,-\frac{b_i}{a_i}\leq t \leq \frac{1-b_i}{a_i}, i=1,...,6$.
(ii) If there exists $a_k<0$, then a non-void interval is $I_k:\,\, \frac{1-b_k}{a_k}\leq t\leq -\frac{b_k}{a_k}$.

Suppose we have a non-void intersection $I=\cap I_i$.
Consequently, the significant parts from probabilistic point of view are
segments of straight lines included in the interval $[0,1]^6$.
\begin{thm}
Let us consider the vector of probabilities
$$(R_1(t),R_2(t),R_3(t),R_4(t),R_5(t)).$$
The most plausible situation is that
which imposes a maximum number of parameters in the family of
straight lines on the reliability hypersurface. \end{thm}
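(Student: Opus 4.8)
The plan is to read the statement through the lens of the preceding theorem, which already established that every maximal family of straight lines lying on the reliability hypersurface is governed by between five and six essential parameters. I interpret "the most plausible situation" as the configuration that realizes the upper bound, namely the six-parameter family; this is the stratum of maximal freedom, hence the generic one. First I would make the phrase "number of parameters" precise. For a fixed case, i.e.\ a fixed choice of which coefficients $a_i$ vanish (recall that the last equation $a_1a_2a_3a_4a_5=0$ forces at least one vanishing), the admissible lines form the zero set of the residual polynomial system written out in the proof of the preceding theorem. Its dimension equals the number of free coefficients minus the rank of the Jacobian of that residual system, with the understanding that the affine reparametrization $t\mapsto \alpha t+\beta$ of each line, together with the relation determining $b_6$ and $a_6$, has already been quotiented out. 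I would compute this rank case by case, exactly as announced at the end of that proof.

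Next I would compare the strata. The key observation is that the residual equations degenerate into identities precisely in the boundary-type configurations. For instance, in the case $a_1=a_5=0$ the surviving constraint is $a_2a_3a_4\,(1-b_1-b_5+b_1b_5)=a_2a_3a_4\,(1-b_1)(1-b_5)=0$, which is satisfied by the choice $b_1=1$ or $b_5=1$ without forcing any further $a_i$ to vanish; the analogous phenomenon occurs for the hyperplane $R_1=1$ (the case $a_1=0$, $b_1=1$). In exactly these strata the Jacobian drops rank by one relative to the generic stratum, so the solution set gains one dimension and the family attains six parameters, whereas in every remaining case an extra monomial factor must vanish and only five parameters survive. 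This simultaneously identifies the six-parameter family and exhibits it as the ruled locus (fiber bundle) of the reliability hypersurface already noted in the preceding proof.

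Finally I would carry the conclusion back into the probability framework of the previous subsection. Each admissible line, restricted to the non-void intersection $I=\cap I_i$ on which every $a_i t+b_i$ remains in $[0,1]$, yields a one-parameter evolution of the component reliabilities $(R_1(t),\dots,R_5(t))$ producing a system reliability $R_{Mc}(t)$ that is linear (or, under the substitution $a_it+b_i\mapsto a_ie^{-b_it}$, exponential) in time. Among all such admissible evolutions the six-parameter stratum is the largest and therefore the least constrained; by a maximal-freedom principle, consistent with the $\min\max$/$\max\min$ optimization viewpoint of Theorem~3.2, it is the generic and hence the most plausible configuration. I would close by stating that the most plausible situation is exactly this six-parameter ruled family.

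The main obstacle is that the statement is qualitative, so the real work is twofold: to pin down a defensible definition of "most plausible" as maximal parameter count, equivalently the top-dimensional stratum of the line variety, and then to verify through the case-by-case Jacobian rank computation both that six is actually attained and that it is never exceeded, together with the precise degeneracy condition (a product of affine factors such as $(1-b_1)(1-b_5)$ vanishing) that characterizes the six-parameter stratum. Everything beyond this identification is the routine linear-algebra bookkeeping of rank counts, which I would not grind through here.
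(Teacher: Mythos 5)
Your proposal is correct and its essential step coincides with the paper's entire proof, which is the single sentence ``In this case we have maximum degrees of freedom (number of parameters)'' --- i.e.\ the identification of plausibility with maximal parametric freedom, exactly the interpretive move you make at the outset. The rest of your write-up (the stratification by vanishing $a_i$, the Jacobian rank drops along factors such as $(1-b_1)(1-b_5)$, and the return to the probabilistic interval $I=\cap I_i$) is faithful to the case analysis of the preceding theorem but goes well beyond what the paper offers for this statement, which it treats as essentially definitional.
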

\begin{proof} In this case we have maximum degrees of freedom
(number of parameters). \end{proof}
\begin{rem} We can reiterate the process, by replacing this time the affine
framework with an exponential or a logarithmic one.
\end{rem}
\subsection{Equi-reliable hypersurfaces}
We further consider in $\mathbb{R}^5$ the constant level algebraic hypersurfaces
of the reliability polynomial (the {\it equi-reliable hypersurfaces}):
\begin{equation}\label{1c}\begin{split}c=&R_1R_4 + R_2R_5 + R_2R_3R_4- R_1R_2R_3R_4\\
    & - R_1R_2R_4R_5 -R_2R_3R_4R_5 + R_1R_2R_3R_4R_5.\end{split}\end{equation}
{\bf Open problem.} How many straight lines are included in each equi-reliable hypersurface?
As an example, the constant level zero hypersurface contains the linear varieties
    $OR_3R_4R_5: R_1=0,R_2=0$; $OR_1R_3R_5: R_2=0,R_4=0$; $OR_2R_3: R_1=0, R_4=0,R_5=0$.
Indeed, we have
    $$R_{Mc}=R_1(R_4 - R_2R_3R_4 - R_2R_4R_5+R_2R_3R_4R_5)+ R_2(R_5 + R_3R_4- R_3R_4R_5).$$
{\bf Acknowledgements.} Partially supported by University Politehnica of Bucharest and by Academy of Romanian Scientists.

The first author would like to acknowledge the financial support of the Ph.D. studies
by the Iraqi Ministry of Higher Education and Scientific Research, and to thank to
Professor Fouad A. Majeed from University of Babylon for the helpful discussions.

PhD student Zahir Abdul Haddi Hassan\\
Department of Mathematics and Informatics,\\
Faculty of Applied Sciences,\\
University {\sc Politehnica} of Bucharest, \\Splaiul Independentei
313, RO-060042, Bucharest, Romania; \\Department of Mathematics, College for Pure Sciences,\\
University of {\sc Babylon}, Babylon, Iraq. \\E-mail: {\tt
zaher\_haddi@yahoo.com}, {\tt mathzahir@gmail.com}\\

\vspace{0.5cm}
Prof. Emeritus Dr. Constantin {\sc Udri\c ste}\\
Department of Mathematics and Informatics, \\Faculty of Applied
Sciences, \\University {\sc Politehnica} of Bucharest, \\Splaiul
Independentei 313, RO-060042, Bucharest, Romania.\\
   E-mail: {\tt udriste@mathem.pub.ro}\\

\vspace{0.5cm}
Prof. Dr. Vladimir {\sc Balan}\\
Department of Mathematics and Informatics, \\Faculty of Applied
Sciences, \\University {\sc Politehnica} of Bucharest, \\Splaiul
Independentei 313, RO-060042, Bucharest, Romania.\\
E-mail: {\tt vbalan@mathem.pub.ro}


\begin{thebibliography}{50}

\bibitem{CU} {\em O. Calin, C. Udri\c ste}, Geometric Modeling in Probability and Statistics, Springer, 2014.

\bibitem{Fw} {\em G.~H. Sandler}, System Reliability Engineering, Prentice-Hall Int.
    Series In Space Technology, Prentice Hall Inc., Englewood Cliffs N.J., 1963.
%
\bibitem{AB} {\em J.-F. Auby, N. Brinzei}, Systems Dependability Assessment
Modeling with Graphs and Finite State Automata, John Wiley \&
Sons, Inc., 2015.
%
\bibitem{HZ} {\em Z.~A.~H. Hassan}, Using graph theory to evaluating network reliability for practical systems,
    Al-Muthanna Journal for Pure Sciences, {\bf 1}, 1 (2012), 207-216.
%
\bibitem{HZ1}{\em Z.~A.~H. Hassan}, Using Some Mathematical Models in Reliability Systems,
    LAP Lambert Academic Publishing, 2012.

 \bibitem{HZ2}  {\em Hassan Z. A. H., Balan, V.},  {Reliability extrema of a complex circuit on bi-variate slice classes},
 Karbala Int. J. Mod. Sci., Elsevier, 1, 1 (2015) 1--66.

\bibitem{Co} {\em C. J. Colbourn}, The Combinatorics of Network Reliability, Oxford University Press,
New York, Oxford, 1987.
%
\bibitem{DK} {\em K. Dohmen},  Inclusion-exclusion and network reliability,
    The Electronic Journal of Combinatorics, Research Paper \#R36, {\bf 5} (1998), 1--8.
%
\bibitem{IY} {\em I.Gertsbakh, Y. Shpungin}, Network Reliability
and Resilience, Springer, 2011.
%
\bibitem{JC} {\em N. Jula, C. Costin}, Methods for Analyzing the Reliability
of Electrical Systems Used Inside Aircrafts, Internet 2015; In Dr. Ramesh Agarwal
(Ed.), Available from:
http://www.intechopen.com/books/recent-advances-in-aircraft-technology/analyzemethods-
of-reliability-of-electrical-systems, February, 2012, Romania.
%
\bibitem {Lj} {\em C. Lucet, J.-F. Manouvrier}, Exact methods to compute network reliability,
    In: D.~C. Ionescu and  N. Limnios (eds), Statistical and Probabilistic Models in Reliability,
    Statistics for Industry and Technology, Birkh\: auser, Boston 1999, 279--294.
%
\bibitem{ML} {\em L.~E. Miller}, Evaluation of network reliability calculation methods,
    In: L.~E. Miller, J.~J. Kelleher and L. Wong, Assessment of Network Reliability Calculation Methods,
    J.~S. Lee Associates, Inc., Report JC-2097-FF under contract DAAL02-92-C-0045, January 1993;
    December, 2004.
%
\bibitem{AA} {\em S.-T. Quek, A.~H.-S. Ang}, Structural system reliability by the
    method of stable configuration, University of Illinois Engineering Experiment
    Station, College of Engineering, Univ. of Illinois at Urbana-Champaign, Civil
    Engineering Studies SRS-529, Technical Report, November 1986,
    https://www.ideals.illinois.edu/handle/2142/14142.
%
\bibitem{RM} {\em M.-L. Rebaiaia, D.~A. Kadi, A. Merlano},
    A practical algorithm for network reliability evaluation based on the factoring
    theorem - a case study of a generic radiocommunication system,
    Journal of Quality, {\bf 16}, 5 (2009), 323--336.
%
\bibitem{SA} {\em A.~M. Sarhan, L. Tadj, A. Al-Khedhairi, A. Mustafa},
    Equivalence factors of a parallel-series system, Applied Sciences, {\bf 10} (2008), 219--230.
%
\bibitem{SAa} {\em A. Satyanarayana}, Unified formula for analysis of some network reliability problems,
    IEEE Transactions on Reliability R., {\bf 31} (1) (1982), 23--32.
%
\bibitem{SP} {\em A. Satyanarayana, A. Prabhakar}, New topological formula and rapid algorithm for reliability
    analysis of complex networks, IEEE Transactions on Reliability, {\bf 27} (1978), 82--100.
%
\bibitem{SW} {\em A. Satyanarayana, R.~K. Wood}, A linear-time algorithm for computing $K$-terminal reliability
    in series-parallel networks, Siam J. Comput., {\bf 14} (4) (1985), 818-–832.
%
\bibitem{SZ} {\em Y.~R. Sun, W.~Y. Zhou}, An inclusion-exclusion algorithm for network reliability with
    minimal cut sets, American Journal of Computational Mathematics, {\bf 2} (2012), 316--320.
%
\bibitem{BA} {\em R. Teruggia}, Reliability Analysis of Probabilistic Networks,
    PhD Thesis, Univ. of Turin, School of Doctorate in Science and High Technology,
    January 2010.
%
\bibitem{ZL} {\em L.~C. Zhao, F.~J. Kong}, A new formula and an algorithm for reliability analysis of networks,
    Microelectron. Reliab., {\bf 37}, 3 (1997), 511--518.
%
\bibitem{RS} {\em G. Royle, A. D. Sokal}, The Brown-Colbourn conjecture on zeros of reliability polynomials is false, Journal of Combinatorial Theory, Series B {\bf 91}, 2 (2004), 345-360.

%}






\end{thebibliography}
\end{document}